\newtheorem{theorem}{Theorem}[section]
\newtheorem{lemma}[theorem]{Lemma}
\newtheorem{e-proposition}[theorem]{Proposition}
\newtheorem{corollary}[theorem]{Corollary}
\newtheorem{e-definition}[theorem]{Definition}
\newtheorem{example}[theorem]{\it Example\/}
\DeclareMathOperator{\Hom}{Hom}
\DeclareMathOperator{\id}{id}
\DeclareMathOperator{\pbw}{pbw}
\DeclareMathOperator{\End}{End}
\DeclareMathOperator{\Todd}{Td}
\DeclareMathOperator{\str}{str}
\DeclareMathOperator{\CE}{CE}
\DeclareMathOperator{\Ber}{Ber}
\DeclareMathOperator{\At}{At}
\newcommand{\PBW}{{\pbw^\nabla}}
\newcommand{\abs}[1]{\left| #1 \right|} 
\newcommand{\isomorphism}{\cong}
\newcommand{\xto}[1]{\xrightarrow{#1}}
\newcommand{\NN}{\mathbb{N}}
\newcommand{\ZZ}{\mathbb{Z}}
\newcommand{\RR}{\mathbb{R}}
\newcommand{\CC}{\mathbb{C}}
\newcommand{\sections}[1]{\Gamma(#1)}
\newcommand{\lie}[2]{[#1,#2]} 
\newcommand{\frakg}{{\mathfrak g}}
\newcommand{\hatS}{\hat{S}}
\newcommand{\degree}[1]{\abs{#1}}
\newcommand{\cM}{\mathcal{M}}
\newcommand{\cA}{\mathcal{A}}
\newcommand{\cald}{\mathcal{D}}
\newcommand{\cE}{\mathcal{E}}
\newcommand{\cR}{\mathcal{R}}
\newcommand{\vect}{\mathfrak{X}}
\newcommand{\XX}{\vect}
\newcommand{\op}{\mathrm{op}}
\newcommand{\Aat}{\alpha}
\newcommand{\T}{\mathcal{T}}
\begin{document}

\title{The Atiyah class of a dg-vector bundle}

\author{Rajan Amit Mehta}
\address{Department of Mathematics \& Statistics\\Smith College\\44 College Lane\\Northampton, MA 01063}
\email{rmehta@smith.edu}
\author{Mathieu Sti\'enon}
\address{Department of Mathematics\\Pennsylvania State University\\University Park, PA 16802}
\email{stienon@psu.edu}
\author{Ping Xu}
\email{ping@math.psu.edu}
\thanks{{Research partially supported by NSF grants DMS1406668, and NSA grants H98230-06-1-0047 
and H98230-14-1-0153.}}

\begin{abstract}
We introduce the notions of
Atiyah class and Todd class of a differential graded vector bundle with respect to a
differential graded Lie algebroid. We prove that the space of vector fields $\XX(\cM)$
on a dg-manifold $\cM$ with homological vector field $Q$ admits a structure of $L_\infty[1]$-algebra 
with the Lie derivative $L_Q$ as unary bracket $\lambda_1$, and the Atiyah cocycle 
$\At_\cM$ corresponding to a torsion-free affine connection as binary bracket $\lambda_2$.
\end{abstract}

\dedicatory{En hommage \`a Charles-Michel Marle \`a l'occasion de son quatre-vingti\`eme anniversaire}

\maketitle

\section{dg-manifolds and dg-vector bundles}

A $\ZZ$-graded manifold $\cM$ with base manifold $M$ is
a sheaf of $\ZZ$-graded, graded-commutative algebras 
$\{\cR_U|U\subset M \ \text{open}\}$
over $M$,  locally isomorphic to $C^\infty (U)\otimes
\hat{S}(V^\vee)$, where $U\subset M$ is an open submanifold,
$V$ is a $\ZZ$-graded vector space, and $\hat{S}(V^\vee)$ denotes
the graded  algebra of formal polynomials on $V$. 
By $C^\infty (\cM)$, we denote the $\ZZ$-graded, graded-commutative algebra
of global sections.  By a dg-manifold, 
we mean a $\ZZ$-graded manifold endowed with a
homological vector field, i.e.\ a vector field $Q$ of degree $+1$ satisfying
$\lie{Q}{Q}=0$.

\begin{example}
Let $A\to M$ be a Lie algebroid over $\CC$. Then $A[1]$ is a dg-manifold 
with the Chevalley--Eilenberg differential $d_{\CE}$ as homological vector field.
In fact, according to Va\u{\i}ntrob~\cite{Vaintrob},
there is a bijection between the Lie algebroid structures on the vector bundle $A\to M$ 
and the homological vector fields on the $\ZZ$-graded manifold $A[1]$.
\end{example}

\begin{example}
Let s be a smooth section of a vector bundle $E\to M$.
Then $E[-1]$ is a dg-manifold with the contraction operator $i_s$ as homological vector field.
\end{example}

\begin{example}
Let $\frakg=\sum_{i\in \ZZ}\frakg_i$ be a $\ZZ$-graded vector space of finite type, 
i.e.\ each $\frakg_i$ is a finite-dimensional vector space.
Then $\frakg[1]$ is a dg-manifold if and only if $\frakg$ is an $L_\infty$-algebra.
\end{example}

A dg-vector bundle is a vector bundle in
the category of dg-manifolds.  We refer the reader to~\cite{Mehta-Qalgebroids,Kotov-Strobl} for
details on dg-vector bundles.
The following example is essentially due to Kotov--Strobl~\cite{Kotov-Strobl}.

\begin{example}
Let $A\to M$ be a gauge Lie  algebroid with anchor $\rho$.
Then $A[1]\to T[1]M$ is a dg-vector bundle, where
the homological vector fields on  $A[1]$ and 
$T[1]M$ are the Chevalley--Eilenberg differentials.
\end{example} 

The example above is a special case of a general fact~\cite{Mehta-Qalgebroids}, that LA-vector bundles~\cite{Kirill-11,stuff,Kirill-15} (also known as VB-algebroids~\cite{G-SM:VBalg}) give rise to dg-vector bundles. 

Given a vector bundle $\cE\stackrel{\pi}{\to} \cM$ of graded
manifolds, its space of sections, denoted
$\sections{\cE}$, is  defined to be $\bigoplus_{j\in \ZZ}\Gamma_j (\cE)$,
where $\Gamma_j (\cE)$ consists of  degree
preserving maps $s\in \Hom (\cM, \cE[-j])$ such that
 $(\pi[-j])\circ s=\id_\cM$, where
$\pi[-j]: \cE[-j]\to \cM$ is the natural map induced from $\pi$; see~\cite{Mehta-Qalgebroids} for more details.
When $\cE\to \cM$ is a dg-vector bundle, the homological
vector fields on $\cE$ and $\cM$ naturally induce
a degree $1$ operator $\mathcal{Q}$ on $\sections{\cE}$, making  
$\sections{\cE}$ a dg-module over $C^\infty(\cM)$. 
Since the space $\sections{\cE^\vee}$ of linear functions on $\cE$ generates
$C^\infty(\cE)$, the converse is also true.

\begin{lemma}
Let $\cE\to\cM$ be a vector bundle object in the category of graded manifolds and suppose $\cM$ is a dg-manifold. 
If $\Gamma(\cE)$  is a dg-module over $C^\infty (\cM)$, then $\cE$ admits a natural dg-manifold structure such that $\cE\to\cM$ is a dg-vector bundle.
In fact, the categories of dg-vector bundles and of locally free dg-modules are equivalent.
\end{lemma}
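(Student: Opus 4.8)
The plan is to use the description of $C^\infty(\cE)$ recalled just above, namely that it is generated over $\pi^\ast C^\infty(\cM)$ by the linear functions $\sections{\cE^\vee}$; concretely, $C^\infty(\cE)$ is the completion, along the filtration by fiberwise polynomial degree, of the symmetric $C^\infty(\cM)$-algebra $S^\bullet_{C^\infty(\cM)}\big(\sections{\cE^\vee}\big)$. Under this description a homological vector field $Q_\cE$ turning $\pi$ into a morphism of dg-vector bundles amounts to a degree $+1$, square-zero derivation of $C^\infty(\cE)$ that restricts to $Q$ on $\pi^\ast C^\infty(\cM)$ and preserves the fiber-degree grading (the latter being the infinitesimal form of fiberwise linearity, i.e.\ commutation with the Euler vector field of $\cE$). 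So the task is to manufacture such a $Q_\cE$ from the dg-module operator $\mathcal{Q}$ on $\sections{\cE}$ and to check the construction is inverse to $\cE\mapsto(\sections{\cE},\mathcal{Q})$.

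First I would transport $\mathcal{Q}$ to the dual bundle, defining a degree $+1$ operator $\mathcal{Q}^\vee$ on $\sections{\cE^\vee}$ by
\[
\langle \mathcal{Q}^\vee\xi,\,s\rangle \;=\; Q\langle\xi,s\rangle \;-\; (-1)^{|\xi|}\langle\xi,\mathcal{Q}s\rangle ,
\]
the unique formula making the pairing $\sections{\cE^\vee}\otimes_{C^\infty(\cM)}\sections{\cE}\to C^\infty(\cM)$ (which is perfect since $\sections{\cE}$ is locally free of finite rank) compatible with $Q$. One checks the right-hand side is $C^\infty(\cM)$-linear in $s$ — so $\mathcal{Q}^\vee$ really lands in $\sections{\cE^\vee}$ — and that $\mathcal{Q}^\vee$ obeys the Leibniz rule $\mathcal{Q}^\vee(f\xi)=Q(f)\,\xi+(-1)^{|f|}f\,\mathcal{Q}^\vee\xi$ over $Q$; a one-line computation with $Q^2=0$ and $\mathcal{Q}^2=0$ then gives $(\mathcal{Q}^\vee)^2=0$. (This is the graded avatar of the classical correspondence between linear vector fields on a vector bundle and derivation endomorphisms of its module of sections.)

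Next, by the universal property of the symmetric algebra, the pair $\big(Q\text{ on }\pi^\ast C^\infty(\cM),\ \mathcal{Q}^\vee\text{ on }\sections{\cE^\vee}\big)$ extends uniquely to a degree $+1$ derivation $Q_\cE$ of $S^\bullet_{C^\infty(\cM)}(\sections{\cE^\vee})$. Since $Q$ and $\mathcal{Q}^\vee$ both preserve the fiber-degree grading, so does $Q_\cE$; hence $Q_\cE$ is continuous for the polynomial filtration, extends to the completion $C^\infty(\cE)$, and commutes with the Euler vector field, i.e.\ it is linear. As $Q_\cE^2$ is again a derivation it vanishes identically once it vanishes on generators, which holds since it is $Q^2=0$ on $\pi^\ast C^\infty(\cM)$ and $(\mathcal{Q}^\vee)^2=0$ on $\sections{\cE^\vee}$. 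Thus $(\cE,Q_\cE)$ is a dg-manifold and $\cE\xto{\pi}\cM$ is a dg-vector bundle. I expect the one genuinely technical point to be the completion bookkeeping here — checking that the algebraic Leibniz extension of $\mathcal{Q}^\vee$ really does descend to the formal power series completion — which is exactly where weight-preservation (linearity) is used; the rest of this step is formal.

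For the equivalence of categories I would bootstrap from the graded Serre--Swan-type equivalence $\cE\mapsto\sections{\cE}$ between vector bundles over the graded manifold $\cM$ and locally free $C^\infty(\cM)$-modules of finite rank (cf.~\cite{Mehta-Qalgebroids}). On objects, $\cE\mapsto(\sections{\cE},\mathcal{Q})$ and $(\sections{\cE},\mathcal{Q})\mapsto(\cE,Q_\cE)$ are mutually inverse up to natural isomorphism, because the steps above exhibit a bijection $Q_\cE\leftrightarrow\mathcal{Q}^\vee\leftrightarrow\mathcal{Q}$ between linear degree-$1$ square-zero vector fields on $\cE$ lying over $Q$ and dg-module structures on $\sections{\cE}$ over $(C^\infty(\cM),Q)$. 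On morphisms, a vector bundle morphism over $\cM$ is a dg-vector bundle morphism exactly when the induced $C^\infty(\cM)$-linear map of sections intertwines the operators $\mathcal{Q}$ — immediate, since such a bundle morphism is determined by its pullback action on linear functions and that action is $\mathcal{Q}^\vee$-equivariant iff the map of sections is $\mathcal{Q}$-equivariant. (For morphisms covering a nontrivial dg-morphism of base one argues identically after pulling back, as in the non-graded case.)
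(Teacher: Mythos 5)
Your proposal is correct and follows essentially the same route the paper indicates: since the linear functions $\sections{\cE^\vee}$ together with $\pi^\ast C^\infty(\cM)$ generate $C^\infty(\cE)$, the dg-module operator $\mathcal{Q}$ dualizes to $\mathcal{Q}^\vee$ on $\sections{\cE^\vee}$ and extends uniquely to a weight-preserving, square-zero degree $+1$ derivation of $C^\infty(\cE)$, giving the homological vector field on $\cE$. Your sign checks, the reflexivity/local-freeness point needed to land $\mathcal{Q}^\vee$ in $\sections{\cE^\vee}$, and the Serre--Swan-style argument for the equivalence of categories are all in order.
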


In this case, the degree $+1$ operator $\mathcal{Q}$ on $\Gamma(\cE)$ gives rise to a cochain complex
\[ \cdots\to\Gamma_i (\cE)\xto{\mathcal{Q}}\Gamma_{i+1}(\cE)\to\cdots ,\]
whose cohomology group will be denoted by $H^\bullet(\Gamma(\cE),\mathcal{Q})$.

In particular, the space $\vect (\cM)$ of vector fields on a dg-manifold $(\cM,Q)$
(i.e.\ graded derivations of $C^\infty (\cM)$), which can be regarded as 
the space of sections $\Gamma(T\cM)$, is naturally a dg-module over
$C^\infty (\cM)$ with the Lie derivative $L_Q:\vect (\cM)\to\vect (\cM)$ playing the role of the degree $+1$ operator $\mathcal{Q}$. 

Thus we have the following

\begin{corollary}\label{cow}
For every dg-manifold $(\cM,Q)$, the Lie derivative $L_Q$ makes $\sections{T\cM}$ into a dg-module over $C^\infty(\cM)$ 
and therefore $T\cM\to\cM$ is naturally a dg-vector bundle. 
\end{corollary}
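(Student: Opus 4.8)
\emph{Proof proposal.} The plan is simply to apply the preceding lemma to the tangent bundle $T\cM\to\cM$, so the only real content is to check that $\sections{T\cM}$ — which, as noted above, is identified with the graded $C^\infty(\cM)$-module $\vect(\cM)$ of graded derivations of $C^\infty(\cM)$ — is a dg-module with $L_Q$ as its degree $+1$ operator. Recall that $L_Q$ acts on a homogeneous vector field $X$ by the graded commutator $L_Q X=\lie{Q}{X}=Q\circ X-(-1)^{\degree{X}}X\circ Q$; this is again a graded derivation of $C^\infty(\cM)$, now of degree $\degree{X}+1$, so $L_Q$ restricts to maps $\vect_j(\cM)\to\vect_{j+1}(\cM)$.

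Next I would verify the two defining properties of a dg-module. For the differential condition $L_Q\circ L_Q=0$: applying the graded Jacobi identity to $\lie{Q}{\lie{Q}{X}}$ gives $2\,\lie{Q}{\lie{Q}{X}}=\lie{\lie{Q}{Q}}{X}$, which vanishes because $\lie{Q}{Q}=0$. For the compatibility with the module structure: given homogeneous $f\in C^\infty(\cM)$ and $X\in\vect(\cM)$, evaluating $\lie{Q}{fX}$ on an arbitrary $g\in C^\infty(\cM)$ and expanding with the Leibniz rule for $Q$ yields, after cancellation, the identity $L_Q(fX)=Q(f)\,X+(-1)^{\degree{f}}f\,L_Q(X)$, which is exactly the graded Leibniz rule for a dg-module. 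Hence $\big(\sections{T\cM},L_Q\big)$ is a dg-module over $C^\infty(\cM)$, and it is locally free because $T\cM\to\cM$ is a vector bundle.

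The preceding lemma then produces a natural dg-manifold structure on $T\cM$ for which $T\cM\to\cM$ is a dg-vector bundle, which is the assertion. As a sanity check one can identify the resulting homological vector field on $T\cM$ with the tangent (complete) lift of $Q$: since $\sections{(T\cM)^\vee}$ generates $C^\infty(T\cM)$ and the induced degree $+1$ operator there is the Lie derivative of $Q$ on one-forms — dual to the construction above — the two structures agree. The argument is entirely a matter of unwinding definitions, so the only place demanding care is the bookkeeping of Koszul signs in the Leibniz computation; I do not anticipate a genuine obstacle.
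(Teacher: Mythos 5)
Your proposal is correct and follows the paper's own route: the paper obtains the corollary by observing that $\vect(\cM)=\sections{T\cM}$ with $L_Q=\lie{Q}{\cdot}$ is a dg-module over $C^\infty(\cM)$ and then invoking the preceding lemma on locally free dg-modules. You merely spell out the verifications ($L_Q^2=0$ via the graded Jacobi identity and $\lie{Q}{Q}=0$, and the Leibniz rule $L_Q(fX)=Q(f)X+(-1)^{\degree{f}}fL_QX$) that the paper leaves implicit, and your sign bookkeeping is right.
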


Following the classical case, the corresponding homological vector field
on $T\cM$ is called the {\em tangent lift} of $Q$.

Differential graded Lie algebroids are another useful notion. 
Roughly, a dg-Lie algebroid can be thought of as a Lie algebroid object in the category of dg-manifolds. For more details, we refer the reader to~\cite{Mehta-Qalgebroids}, where dg-Lie algebroids are called $Q$-algebroids.

Differential graded foliations constitute an important class of examples of dg-Lie algebroids.
\begin{lemma}\label{donkey}
Let $\cald\subset T\cM$ be an integrable distribution on a graded manifold $\cM$. 
Suppose there exists a homological vector field $Q$ on $\cM$ such that $\sections{\cald}$ is stable under $L_Q$. 
Then $\cald\to\cM$ is a dg-Lie algebroid with the inclusion $\rho:\cald\to T\cM$ as its anchor map. 
\end{lemma}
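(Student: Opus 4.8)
The plan is to first promote $\cald$ to a dg-manifold by means of the equivalence between dg-vector bundles and locally free dg-modules established in the (unnumbered) lemma preceding Corollary~\ref{cow}, and then to check that the homological vector field so obtained is compatible with the graded Lie algebroid structure that $\cald$ already inherits from $T\cM$.

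First I would note that the hypothesis that $\sections{\cald}$ is stable under $L_Q$ makes $\bigl(\sections{\cald},\,L_Q|_{\sections{\cald}}\bigr)$ a dg-submodule of $\bigl(\sections{T\cM},L_Q\bigr)$: the Leibniz identity $L_Q(fs)=Q(f)\,s+(-1)^{\degree{f}}f\,L_Q(s)$ is inherited from $\sections{T\cM}$, and $L_Q^2=0$ because $2L_Q^2=L_{[Q,Q]}=0$. Since $\cald$ is a vector subbundle of $T\cM$, this dg-module is locally free, so by the cited lemma $\cald$ carries a canonical dg-vector bundle structure over $\cM$; in particular $\cald$ is a dg-manifold, the inclusion $\rho\colon\cald\to T\cM$ is a morphism of dg-vector bundles (hence of dg-manifolds), and the degree $+1$ operator induced on $\sections{\cald}$ is, by construction, the restriction of $L_Q$.

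Next I would use that an integrable (i.e.\ involutive) distribution satisfies $[\sections{\cald},\sections{\cald}]\subseteq\sections{\cald}$, so that $\cald$ is a Lie subalgebroid of the tangent Lie algebroid $T\cM$, with bracket the restriction of the Lie bracket of vector fields and anchor $\rho$. It then remains to verify the two compatibility conditions of a dg-Lie algebroid (a $Q$-algebroid in the sense of~\cite{Mehta-Qalgebroids}): that the anchor intertwines the homological operators, and that the operator induced on $\sections{\cald}$ is a graded derivation of the bracket. The former is immediate because $\rho$ is the inclusion of dg-modules $\bigl(\sections{\cald},L_Q\bigr)\into\bigl(\sections{T\cM},L_Q\bigr)$. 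The latter, $L_Q[X,Y]=[L_Q X,Y]+(-1)^{\degree{X}}[X,L_Q Y]$ for $X,Y\in\sections{\cald}$, is precisely the graded Jacobi identity $[Q,[X,Y]]=[[Q,X],Y]+(-1)^{\degree{X}}[X,[Q,Y]]$ in the Lie algebra $\vect(\cM)$, restricted to the submodule $\sections{\cald}$.

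I expect the only delicate point to be notational rather than mathematical: one should fix at the outset the formulation of ``dg-Lie algebroid'' that is phrased at the level of sections --- a homological vector field on the total space, linear over $Q$, together with the derivation and anchor-compatibility identities --- so that the equivalence of categories from the earlier lemma applies directly and the whole argument reduces to the Leibniz rule, the identity $[Q,Q]=0$, involutivity of $\cald$, and the graded Jacobi identity in $\vect(\cM)$. With that formulation in place there is no genuine obstacle, since each of these four ingredients is either a hypothesis of the lemma or a standard Cartan-calculus identity on $T\cM$.
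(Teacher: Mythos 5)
Your argument is correct and is exactly the verification the paper leaves to the reader (the lemma is stated there without proof): stability of $\sections{\cald}$ under $L_Q$ plus local freeness gives the dg-vector bundle structure via the equivalence with locally free dg-modules, involutivity gives the Lie subalgebroid structure, and the two compatibility conditions reduce to $L_{[Q,Q]}=0$ and the graded Jacobi identity in $\vect(\cM)$. No gaps; the only care needed is, as you note, fixing the section-level formulation of a dg-Lie algebroid ($Q$-algebroid) so these identities are literally what must be checked.
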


\section{Atiyah class and Todd class of a dg-vector bundle}

Let $\cE\to\cM$ be a dg-vector bundle and let $\cA\to\cM$ be a dg-Lie algebroid with anchor $\rho:\cA\to T\cM$. An $\cA$-connection on $\cE\to\cM$ is a degree $0$ map $\nabla:\sections{\cA}\otimes\sections{\cE}\to\sections{\cE}$ such that
\begin{gather*}
\nabla_{fX} s = f \nabla_X s \\ 
\intertext{and} 
\nabla_X (fs) = \rho(X)(f) s + (-1)^{|X||f|} f \nabla_X s
\end{gather*}
for all $f\in C^\infty(\cM)$, $X\in\sections{\cA}$, and $s\in\sections{\cE}$. Here we use the `absolute value' notation to denote the degree of the argument. 
When we say that $\nabla$ is of  degree $0$, we actually mean that $|\nabla_X s| = |X| + |s|$ for every pair of homogeneous elements $X$ and $s$.
Such connections always exist since the standard partition of unity argument holds in the context of graded manifolds.  Given a dg-vector bundle $\cE\to\cM$ and an $\cA$-connection $\nabla$ on it, 
we can consider the bundle map $\At_\cE:\cA\otimes\cE\to\cE$ defined by 
\begin{equation}\label{eq:At}
\At_\cE (X,s) := \mathcal{Q}(\nabla_X s) -\nabla_{\mathcal{Q}(X)}s -(-1)^{|X|}\nabla_X\big(\mathcal{Q}(s)\big), 
\quad \forall X\in\sections{\cA}, s\in\sections{\cE}.
\end{equation}

\begin{e-proposition}
\begin{enumerate}
\item $\At_\cE:\cA\otimes\cE\to\cE$ is a degree $+1$ bundle map
and therefore can also be regarded as a degree $+1$ section of 
$\cA^\vee\otimes\End\cE$. 
\item $\At_\cE$ is a cocycle: $\mathcal{Q}(\At_\cE)=0$.
\item The cohomology class of $\At_\cE$ is independent of the choice of the connection $\nabla$.
\end{enumerate}
\end{e-proposition}

Thus there is a natural cohomology class $\Aat_\cE := [\At_\cE]$ in $H^1\big(\Gamma(\cA^\vee\otimes\End\cE),Q\big)$. The class $\Aat_\cE$ is
called the \emph{Atiyah class} of the dg-vector bundle $\cE\to\cM$ relative to the dg-Lie algebroid $\cA\to\cM$.

The Atiyah class of a dg-manifold, which is the obstruction to the existence of connections compatible with the differential, 
was first investigated by Shoikhet~\cite{Shoikhet} in relation with Kontsevich's formality theorem and Duflo formula.
More recently, the Atiyah class of a dg-vector bundle appeared in Costello's work~\cite{MR2827826}.

We define the \emph{Todd class} $\Todd_\cE$ of a dg-vector bundle $\cE\to\cM$ relative to a dg-Lie algebroid $\cA\to\cM$ by 
\begin{equation}
\Todd_\cE:=\Ber\left(\frac{1-e^{-\Aat_\cE}}{\Aat_\cE}\right)\in\prod_{k\geq 0} H^k\big(\sections{\wedge^k\cA^\vee},Q\big)
,\end{equation}
where $\Ber$ denotes the Berezinian~\cite{Manin} and $\wedge^k\cA^\vee$ denotes the dg vector bundle $S^k (\cA^\vee[-1])[k]\to\cM$. One checks that $\Todd_{\cE}$ can be expressed in terms of scalar Atiyah classes
$c_k=\frac{1}{k!}(\frac{i}{2\pi})^k \str \Aat_\cE^k \in H^k\big(\sections{\wedge^k\cA^\vee},Q\big)$.
Here $\str: \End \cE\to C^\infty (\cM)$ denotes the supertrace.
Note that $\str\Aat_\cE^k\in\sections{\wedge^k\cA^\vee}$ since $\Aat_\cE^k\in\sections{\wedge^k\cA^\vee}\otimes_{C^\infty(\cM)}\End\cE$. 
If $\cA=T\cM$, we write $\Omega^k(\cM)$ instead of $\sections{\wedge^k T^\vee\cM}$.

\section{Atiyah class and Todd class of a dg-manifold}

Consider a dg-manifold $(\cM, Q)$. According to Lemma~\ref{donkey},
 its tangent bundle  $T\cM$ is indeed a dg-Lie algebroid.
By the {\it Atiyah class of a dg-manifold} $(\cM, Q)$, 
denoted  $\Aat_\cM$, we  mean the  Atiyah class  of the  tangent dg-vector
 bundle $T\cM\to \cM$ with respect to the dg-Lie algebroid $T\cM$.
Similarly, the Atiyah $1$-cocycle of a dg manifold $\cM$ corresponding to an affine connection on $\cM$ 
(i.e.\ a $T\cM$-connection on $T\cM\to\cM$) is the $1$-cocycle defined as in Eq.~\eqref{eq:At}. 

\begin{lemma}\label{dog}
Suppose $V$ is a vector space.
The only connection on the graded manifold $V[1]$ is the trivial connection.
\end{lemma}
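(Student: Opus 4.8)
The plan is to exploit the fact that $V[1]$ is an extremely rigid graded manifold: its algebra of functions is the free graded-commutative algebra $S(V^\vee[-1])$ on the single negatively-graded generator space $V^\vee[-1]$, placed in degree $-1$. First I would unwind what an affine connection $\nabla$ on $V[1]$ is: a degree $0$ map $\nabla : \vect(V[1]) \otimes \vect(V[1]) \to \vect(V[1])$ satisfying the usual $C^\infty$-linearity and Leibniz rules from Section~2. The key structural observation is that $\vect(V[1]) = \Gamma(T(V[1]))$ is, as a module over $C^\infty(V[1]) = S(V^\vee[-1])$, freely generated by the constant vector fields $\partial_{\xi^i}$ (where $\xi^i$ are linear coordinates on $V[1]$, of degree $-1$); equivalently, $T(V[1]) \cong V[1] \times V$ is a trivial bundle, with $V$ sitting in degree $0$. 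So each $\partial_{\xi^i}$ has degree $+1$.

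Next I would pin down $\nabla_{\partial_{\xi^i}} \partial_{\xi^j}$. By $C^\infty(V[1])$-linearity in both slots it suffices to compute this; it is an element of $\vect(V[1])$, so it expands as $\sum_k \Gamma^k_{ij}\, \partial_{\xi^k}$ with Christoffel symbols $\Gamma^k_{ij} \in C^\infty(V[1]) = S(V^\vee[-1])$. The degree constraint now does all the work: $\nabla$ has degree $0$, so $\abs{\nabla_{\partial_{\xi^i}} \partial_{\xi^j}} = \abs{\partial_{\xi^i}} + \abs{\partial_{\xi^j}} = 2$, while $\abs{\partial_{\xi^k}} = 1$, forcing $\abs{\Gamma^k_{ij}} = 1$. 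But $S(V^\vee[-1])$ is concentrated in non-positive degrees (its generators $\xi^i$ have degree $-1$, so monomials have degrees $0, -1, -2, \dots$), hence has no elements of degree $+1$. Therefore $\Gamma^k_{ij} = 0$ for all $i,j,k$, i.e. $\nabla_{\partial_{\xi^i}} \partial_{\xi^j} = 0$, which says precisely that $\nabla$ is the trivial (coordinate) connection.

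I expect the only genuinely delicate point to be the first step: verifying carefully that $T(V[1])$ is globally trivial with fibre $V$ in degree $0$ and that $\vect(V[1])$ is globally free on the $\partial_{\xi^i}$ — i.e. that "local" coordinate computations are in fact global here. This is immediate because $V[1]$ is (globally, not just locally) of the model form $C^\infty(\mathrm{pt}) \otimes \hat S(V^\vee[-1])$ with trivial base, so there is a single global chart and no transition functions to worry about; the linear coordinates $\xi^i$ are global. Once that is in place, the degree-counting argument above is completely mechanical, and one concludes that $\nabla$ is the unique trivial connection. I would also remark that the same degree argument shows any $V[1]$-connection on any dg-vector bundle over $V[1]$ is forced, which is why $V[1]$ will serve as a convenient local model in the sequel.
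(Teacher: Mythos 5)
Your proof is correct and is essentially the paper's own argument: $\vect(V[1])$ is freely generated over $C^\infty(V[1])$ by the constant (contraction) vector fields, so a connection is determined by its Christoffel symbols, and these are killed by pure degree counting. Two small caveats: you use the shift convention opposite to the paper's (there the coordinates on $V[1]$ have degree $+1$ and the generators $\iota_v$ of $\vect(V[1])$ have degree $-1$, so the obstruction reads ``no vector fields of degree $-2$'' rather than ``no functions of positive degree,'' and your parenthetical ``$V$ sitting in degree $0$'' is inconsistent with $\abs{\partial_{\xi^i}}=+1$); and your closing generalization is false --- for a dg-vector bundle $\cE\to V[1]$ whose fibre has nonzero components in suitable pairs of degrees (e.g.\ two adjacent ones), $\nabla_{\iota_v}s$ lands in a degree where nonzero sections do exist, so the connection is not forced to be trivial.
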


\begin{proof}
Since the graded algebra of functions on $V[1]$ is $\wedge(V^\vee)$, 
every vector $v\in V$ determines a degree $-1$ vector field $\iota_v$ on $V[1]$, 
which acts as a contraction operator on $\wedge(V^\vee)$. The $C^\infty(V[1])$-module of all vector fields on $V[1]$ is generated by its subset $\{\iota_v\}_{v\in V}$. It follows that a connection $\nabla$ on $V[1]$ is completely determined by 
the knowledge of $\nabla_{\iota_v}\iota_w$ for all $v,w\in V$.
Since the degree of every vector field $\nabla_{\iota_v}\iota_w$ must be $-2$ and there are no nonzero vector fields of degree $-2$, it follows
that $\nabla_{\iota_v}\iota_w=0$.
\end{proof}

Given a finite-dimensional Lie algebra $\frakg$, consider the dg-manifold $(\cM,Q)$, where $\cM=\frakg[1]$ and $Q$ is the Chevalley-Eilenberg differential $d_{\CE}$. The following result can be easily verified using the canonical trivalization $T\cM\cong\frakg[1]\times\frakg[1]$.

\begin{lemma}
Let $(\cM, Q)=(\frakg[1], d_{\CE})$ be the canonical dg-manifold corresponding to a finite-dimensional Lie algebra $\frakg$. 
Then,
\[ H^k( \Gamma (T^\vee \cM \otimes \End  T\cM), Q)\cong 
H^{k-1}_{\CE} (\frakg, \frakg^\vee\otimes \frakg^\vee \otimes \frakg) ,\] 
and 
\[ H^k(\Omega^k(\cM),Q)\cong(S^k\frakg^\vee)^\frakg .\]
\end{lemma}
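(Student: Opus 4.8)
The plan is to transport both complexes through the canonical trivialization $T\cM\cong\frakg[1]\times\frakg[1]$ and to recognize them as shifted Chevalley--Eilenberg complexes of $\frakg$ with suitable coefficients, so that each isomorphism is read off by inspecting a single cochain degree. Concretely, one writes $C^\infty(\cM)=\wedge^\bullet\frakg^\vee$ with the degree $1$ generators $\xi^a$ dual to a basis $\{e_a\}$ of $\frakg$, so that $Q=d_{\CE}=\tfrac12 c^a_{bc}\,\xi^b\xi^c\,\partial_{\xi^a}$ in terms of the structure constants $c^a_{bc}$. The trivialization exhibits $\sections{T\cM}$ as the free $\wedge^\bullet\frakg^\vee$-module on the degree $-1$ frame $\{\partial_{\xi^a}\}$, so $\sections{T\cM}\cong\wedge^\bullet\frakg^\vee\otimes\frakg[1]$, and dually $\sections{T^\vee\cM}\cong\wedge^\bullet\frakg^\vee\otimes\frakg^\vee[-1]$ on the coframe $\{d\xi^a\}$.

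Tensoring, $\sections{T^\vee\cM\otimes\End T\cM}\cong\wedge^\bullet\frakg^\vee\otimes(\frakg^\vee\otimes\frakg^\vee\otimes\frakg)$, with the coefficient fiber $\frakg^\vee\otimes\frakg^\vee\otimes\frakg$ in degree $1+1-1=1$, so its total-degree-$k$ component is $\wedge^{k-1}\frakg^\vee\otimes(\frakg^\vee\otimes\frakg^\vee\otimes\frakg)=C^{k-1}(\frakg,\frakg^\vee\otimes\frakg^\vee\otimes\frakg)$. For the form bundles, the convention $\wedge^k T^\vee\cM=S^k(T^\vee\cM[-1])[k]$ places the fiber $\frakg^\vee$ of $T^\vee\cM[-1]$ in the even degree $2$, so the graded symmetric power is the ordinary one and, after the shift $[k]$, the fiber of $\wedge^k T^\vee\cM$ is $S^k\frakg^\vee$ concentrated in degree $k$; hence $\Omega^k(\cM)\cong\wedge^\bullet\frakg^\vee\otimes S^k\frakg^\vee$ with total degree equal to wedge-degree plus $k$, its degree-$k$ component being $S^k\frakg^\vee=C^0(\frakg,S^k\frakg^\vee)$ and its degree-$(k-1)$ component being zero. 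It remains to identify the operators. Since $[Q,\partial_{\xi^a}]=\sum_c\xi^c\otimes[e_a,e_c]$, the Lie derivative $L_Q$ on $\sections{T\cM}$ is, under the identification above, the Chevalley--Eilenberg differential of $\frakg$ with coefficients in the adjoint representation (for one of the standard sign conventions, the sign being irrelevant to cohomology), and dualizing gives the coadjoint differential on $\sections{T^\vee\cM}$. Because the operator $\mathcal Q$ on sections of a tensor product, resp.\ symmetric power, of dg-vector bundles is the tensor-product, resp.\ derivation, differential, it follows that $\mathcal Q$ on $\sections{T^\vee\cM\otimes\End T\cM}$ is the CE differential with coefficients in $\frakg^\vee\otimes\frakg^\vee\otimes\frakg$ with its natural $\frakg$-module structure, and $\mathcal Q$ on $\Omega^k(\cM)$ is the CE differential with coefficients in $S^k\frakg^\vee$, the $k$-th symmetric power of the coadjoint representation.

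Combining these, the degree-$k$ cohomology of $\big(\sections{T^\vee\cM\otimes\End T\cM},Q\big)$ is the cohomology of $C^{\bullet-1}(\frakg,\frakg^\vee\otimes\frakg^\vee\otimes\frakg)$ in total degree $k$, namely $H^{k-1}_{\CE}(\frakg,\frakg^\vee\otimes\frakg^\vee\otimes\frakg)$, and the degree-$k$ cohomology of $(\Omega^k(\cM),Q)$ is the cohomology of $C^\bullet(\frakg,S^k\frakg^\vee)$ at $C^0$; since the preceding term vanishes, this is $\ker\big(\mathcal Q\colon S^k\frakg^\vee\to C^1(\frakg,S^k\frakg^\vee)\big)=(S^k\frakg^\vee)^\frakg$. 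The only real work, as usual for statements of this type, is the sign-and-shift bookkeeping: confirming that $S^k(T^\vee\cM[-1])[k]$ genuinely yields honest, sign-free symmetric powers of $\frakg^\vee$ sitting in degree $k$, and that the frame computation of $L_Q$ reproduces the Chevalley--Eilenberg differential for the adjoint and coadjoint modules rather than a twisted variant; everything else is a direct translation through the trivialization.
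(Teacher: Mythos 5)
Your proof is correct and follows exactly the route the paper indicates (the paper offers no detailed argument beyond the remark that the lemma "can be easily verified using the canonical trivialization $T\cM\cong\frakg[1]\times\frakg[1]$"). Your degree bookkeeping — the fiber of $T^\vee\cM\otimes\End T\cM$ sitting in degree $1$, the fiber of $S^k(T^\vee\cM[-1])[k]$ being the ordinary symmetric power $S^k\frakg^\vee$ in degree $k$ with vanishing degree-$(k-1)$ component, and the identification of $L_Q$ with the Chevalley--Eilenberg differential for the adjoint and coadjoint modules — supplies precisely the verification the paper leaves to the reader.
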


\begin{e-proposition}
Let $(\cM, Q)=(\frakg[1], d_{\CE})$ be the canonical dg-manifold corresponding to a finite-dimensional Lie algebra $\frakg$. 
Then the Atiyah class $\Aat_{\frakg[1]}$ is precisely the Lie bracket of $\frakg$ regarded as an element of 
$(\frakg^\vee\otimes\frakg^\vee\otimes\frakg)^\frakg
\isomorphism H^1\big(\sections{T^\vee\cM\otimes\End T\cM},Q\big)$.
Consequently, the isomorphism 
\[ \prod_k H^k\big(\Omega^k(\cM),Q\big)\xto{\isomorphism}
\big(\widehat{S}(\frakg^\vee)\big)^{\frakg} \] 
maps the Todd class $\Todd_{\frakg[1]}$ onto the Duflo element of $\frakg$.
\end{e-proposition}

\begin{example}
Consider a dg-manifold of the form $\cM=(\RR^{m|n},Q)$.
Let $(x_1,\cdots,x_m;x_{m+1}\cdots x_{m+n})$
be coordinate functions on $\RR^{m|n}$, and write
$Q=\sum_k Q_k(x)\frac{\partial}{\partial x_k}$.
Then the Atiyah $1$-cocycle associated to the trivial connection 
$\nabla_{\frac{\partial}{\partial x_i}}\frac{\partial}{\partial x_j}=0$ 
is given by
\begin{equation}\label{velociraptor}
 \At_{\cM}\left(\frac{\partial}{\partial x_i},\frac{\partial}{\partial x_j}\right)
=(-1)^{|x_i|+|x_j|}\sum_k\frac{\partial^2 Q_k}{\partial x_i\partial x_j}
\frac{\partial}{\partial x_k} 
\end{equation}
As we can see from \eqref{velociraptor}, the Atiyah 1-cocycle $\At_\cM$ includes the information about the homological vector field of second-order and higher. 
\end{example}

\section{Atiyah class and homotopy Lie algebras}

Let $\cM$ be a graded manifold. A $(1,2)$-tensor of degree $k$ on $\cM$ is a $\CC$-linear map $\alpha: \vect(\cM) \otimes_{\mathbb{C}} \vect(\cM) \to \vect(\cM)$
such that $|\alpha(X,Y)| = |X| + |Y| + k$ and
\[ \alpha(fX,Y) = (-1)^{k|f|} f \alpha(X,Y) = (-1)^{|f||X|}\alpha(X,fY).\]
We denote the space of  $(1,2)$-tensors of degree $k$ by $\T^{1,2}_k(\cM)$, 
and the space of all $(1,2)$-tensors by $\T^{1,2}(\cM) = \bigoplus_k \T^{1,2}_k(\cM)$.

The torsion of an affine  connection $\nabla$ is given by
\begin{equation}
T(X,Y) = \nabla_X Y - (-1)^{|X||Y|} \nabla_Y X - [X,Y].
\end{equation}
The torsion is an element in $\T^{1,2}_0(\cM)$.
Given any affine connection, one can define its opposite
affine connection $\nabla^\op$, given by
\begin{equation}
\nabla^\op_X Y = \nabla_X Y - T(X,Y) = [X,Y] + (-1)^{|X||Y|} \nabla_Y X.
\end{equation}
The average $\frac{1}{2}(\nabla+\nabla^\op)$ is a torsion-free
affine connection. This shows that torsion-free affine
connections always exist on graded manifolds. 

In this section, we show that, as in the classical situation considered by Kapranov in~\cite{Kapranov,LSX:14}, the Atiyah $1$-cocycle of a dg-manifold gives rise to  
an interesting homotopy Lie algebra.
As in the last section, let $(\cM,Q)$ be a dg-manifold and let $\nabla$ be an affine connection on $\cM$. 
The following can be easily verified by direct computation.
\begin{enumerate}
\item The anti-symmetrization of the Atiyah $1$-cocycle $\At_\cM$ is equal to 
$L_Q T$, so $\At_\cM$ is graded antisymmetric up to an exact term. 
In particular, if $\nabla$ is torsion-free, we have 
\[ \At_\cM(X,Y)=(-1)^{\degree{X}\degree{Y}}\At_\cM(Y,X) .\] 
\item The degree $1+\degree{X}$ operator $\At_\cM(X,-)$ 
need not be a derivation of the degree $+1$ `product' $\XX(\cM)\otimes_{\mathbb{C}}\XX(\cM)\xto{\At_\cM}\XX(\cM)$. 
However, the Jacobiator
\begin{multline*} (X,Y,Z)\mapsto 
\At_\cM\big(X,\At_\cM(Y,Z)\big) - 
\big\{
(-1)^{\degree{X}+1}\At_\cM\big(\At_\cM(X,Y),Z\big)
\\ +(-1)^{(\degree{X}+1)(\degree{Y}+1)}\At_\cM\big(Y,\At_\cM(X,Z)\big)
\big\} ,
\end{multline*}
of $\At_\cM$, which vanishes precisely when $\At_\cM(X,-)$ is a derivation of $\At_\cM$, 
is equal to $\pm L_Q(\nabla\At_\cM)$. Hence $\At_\cM$ satisfies the graded Jacobi identity up to the exact term $L_Q(\nabla\At_\cM)$.
\end{enumerate}

Armed with this motivation, we can now state the main result of this note.
\begin{theorem}\label{horse}
Let $(\cM,Q)$ be a dg-manifold and let $\nabla$ be a torsion-free affine connection on $\cM$.
There exists a sequence $(\lambda_k)_{k\geq 2}$ of maps $\lambda_k\in\Hom(S^k (T\cM),T\cM[-1])$ starting with 
$\lambda_2:=\At_\cM\in\Hom(S^2(T\cM),T\cM[-1])$ 
which, together with $\lambda_1:=L_Q:\XX(\cM)\to\XX(\cM)$, 
satisfy the $L_\infty[1]$-algebra axioms.
As a consequence, the space of vector fields $\XX(\cM)$
on a dg-manifold $(\cM,Q)$ admits an $L_\infty[1]$-algebra structure 
with the Lie derivative $L_Q$ as unary bracket $\lambda_1$ 
and the Atiyah cocycle $\At_\cM$ as binary bracket $\lambda_2$.
\end{theorem}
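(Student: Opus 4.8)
The plan is to construct the higher brackets $\lambda_k$ by the same Kapranov-type mechanism that produces the $L_\infty$-structure on $T_X[-1]$ for a complex manifold: iterate the covariant derivative of the Atiyah cocycle. Concretely, I would set $\lambda_2 = \At_\cM$ and then define $\lambda_{k+1}$, for $k \geq 2$, by a graded-symmetrization of $\nabla^{k-1}\At_\cM$ — that is, take $\nabla\At_\cM \in \sections{S^2 T^\vee\cM \otimes T^\vee\cM \otimes T\cM}$, iterate to get $\nabla^{k-1}\At_\cM$, symmetrize the lower indices, and read the result as an element of $\Hom(S^{k+1}(T\cM), T\cM[-1])$ (degree $+1$). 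Since $\nabla$ is torsion-free, item (1) of the motivational computation guarantees $\lambda_2$ is graded symmetric, and one checks that the symmetrized iterated derivatives land in the right symmetric power. The whole package $(\lambda_k)$ should be viewed as the Taylor coefficients of a single fiberwise object: exponentiating $\nabla$ identifies a formal neighborhood of the zero section of $T\cM$ with $\cM$, and the homological vector field $Q$, transported through this ``Kapranov isomorphism,'' becomes a formal vertical vector field on $T\cM$ whose Taylor expansion at the zero section is exactly $\lambda_1 + \lambda_2 + \lambda_3 + \cdots$.

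The cleanest route to the $L_\infty[1]$ axioms is to avoid checking the quadratic identities $\sum \pm \lambda_i(\lambda_j(\cdots),\cdots) = 0$ term by term, and instead exhibit the structure as a Maurer–Cartan element or, better, as the pushforward of the square-zero vector field $Q$. The key observation is that $[Q,Q]=0$ is equivalent to the full infinite family of $L_\infty[1]$ relations once one knows the transported vector field is ``linear-free'' in the appropriate sense — i.e. that the exponential map trivializes enough. So the steps, in order, are: (i) use $\nabla$ to build the formal exponential map $\exp^\nabla : \widehat{T\cM} \to \cM$ along the zero section (this is the graded analogue of Kapranov's construction, and torsion-freeness is what makes it well-behaved at second order, matching item (1)); (ii) pull back $Q$ to get a vector field $\hat Q$ on the formal completion of $T\cM$ along its zero section, decompose $\hat Q$ by polynomial degree in the fiber coordinates as $\hat Q = \sum_{k\geq 1}\hat Q_k$, and verify the $k=1$ piece recovers $L_Q$ and the $k=2$ piece recovers $\At_\cM$ (the latter is essentially the content of Eq.~\eqref{velociraptor} read invariantly); (iii) observe that $\hat Q$ is vertical (tangent to the fibers of $T\cM \to \cM$, after the identification) so its Taylor coefficients $\hat Q_k$ are genuine elements of $\Hom(S^k(T\cM), T\cM[-1])$; (iv) translate $[\hat Q,\hat Q]=0$ — which holds because $\exp^\nabla$ is a diffeomorphism of formal manifolds and $[Q,Q]=0$ — into the statement that $(\hat Q_k)_{k\geq 1}$ satisfies the $L_\infty[1]$ relations, using the standard dictionary between degree-$1$ square-zero vertical vector fields on $S(E)$ and $L_\infty[1]$-structures on $E$.

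The main obstacle I anticipate is step (i)–(ii): making the formal exponential map and the pushforward of $Q$ rigorous in the $\ZZ$-graded setting, where one must be careful about convergence of the formal series, about signs coming from the Koszul rule when symmetrizing the iterated covariant derivatives, and about the fact that $\cM$ is only locally of the form $C^\infty(U)\otimes\hat S(V^\vee)$ so the ``formal neighborhood of the zero section of $T\cM$'' has to be glued from local pieces. In the purely even case Kapranov's argument and its Dolgushev-style refinements handle this, but here the fiber coordinates themselves carry nonzero degree, so the polynomial-degree decomposition of $\hat Q$ does not coincide with the cohomological-degree decomposition, and one must check that the ``linear'' part is precisely $L_Q$ and nothing of higher polynomial degree leaks into it — this is where torsion-freeness of $\nabla$ is used a second time. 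A cleaner alternative, which I would pursue in parallel, is purely algebraic: take the brackets $\lambda_k := \tfrac{1}{(k-1)!}\,\mathrm{Sym}(\nabla^{k-2}\At_\cM)$ as an explicit \emph{definition} and prove the $L_\infty[1]$ relations by induction on arity, using that $\lambda_1=L_Q$ is a differential, that $L_Q$ is a derivation of $\nabla$ in the appropriate sense, and the two identities in the motivational list (antisymmetry of $\At_\cM$ up to $L_QT$, and the Jacobiator of $\At_\cM$ equal to $\pm L_Q(\nabla\At_\cM)$) as the base cases; the inductive step then amounts to showing $L_Q(\nabla^{k-1}\At_\cM)$ equals the relevant quadratic combination of the lower $\lambda_j$'s, which is a Bianchi-type identity for the iterated covariant derivatives of a $Q$-cocycle. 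Either way, the heart of the matter is that $\At_\cM$ is a $L_Q$-cocycle (Proposition in \S2) and that its higher covariant derivatives assemble, via $[Q,Q]=0$, into a coherent homotopy-Lie tower.
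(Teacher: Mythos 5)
Your overall strategy is the paper's strategy: transport $Q$ (more precisely, the coderivation $L_Q$ it induces) through an exponential-type isomorphism determined by $\nabla$, obtain a square-zero degree~$+1$ coderivation of $\sections{S(T\cM)}$, and read off the brackets $\lambda_k$ as its Taylor coefficients, with $[Q,Q]=0$ doing all the work of verifying the quadratic $L_\infty[1]$ relations. The gap is that the single ingredient your steps (i), (ii) and (iv) all lean on --- the formal exponential map $\exp^\nabla$ --- is exactly the object the paper points out does \emph{not} exist geometrically for a graded manifold. The paper's substitute, and the actual technical content of the proof, is the Poincar\'e--Birkhoff--Witt isomorphism of coalgebras $\PBW:\sections{S(T\cM)}\to D(\cM)$ defined purely algebraically by the inductive formula preceding Eq.~\eqref{eq:PBW}: $D(\cM)$ plays the role of distributions along the diagonal, $\sections{S(T\cM)}$ the role of distributions along the zero section, and the iteration formula replaces the pushforward by $\exp^\nabla$. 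With that in hand, $\delta=(\PBW)^{-1}\circ L_Q\circ\PBW$ is automatically a square-zero coderivation, its dual $D$ is a square-zero derivation of $\sections{\hatS(T^\vee\cM)}$, and Theorem~\ref{zebra} identifies the linear Taylor coefficient with $L_Q$ and the quadratic one with $\At_\cM$; Theorem~\ref{horse} follows. You correctly diagnose that making the exponential map rigorous is ``the main obstacle,'' but you do not resolve it, and resolving it is the proof. (A smaller inaccuracy: the transported object is not purely vertical --- its arity-one piece is $L_Q$, which is not $C^\infty(\cM)$-linear; only the coefficients $R_k$, $k\geq 2$, are tensorial.)

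Your fallback route --- \emph{defining} $\lambda_k$ as a normalized symmetrization of $\nabla^{k-2}\At_\cM$ and proving the relations by induction --- I would caution against. The coefficients $R_k$ produced by the PBW transfer are not in general bare symmetrized iterated covariant derivatives of the Atiyah cocycle; already at arity~$3$ curvature correction terms appear, and the motivational identity ``Jacobiator $=\pm L_Q(\nabla\At_\cM)$'' only tells you that the obstruction at arity~$3$ is exact, not that $\mathrm{Sym}(\nabla\At_\cM)$ itself is the correct primitive making the higher relations close up. Without the coalgebra isomorphism packaging the whole tower at once, the inductive verification of the quadratic identities is not a routine Bianchi-type computation and, with your proposed explicit formulas, would likely fail as stated.
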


To prove Theorem~\ref{horse}, we introduce a Poincar\'e--Birkhoff--Witt map for graded manifolds.

It was shown in~\cite{LSX:14} that every torsion-free affine connection 
$\nabla$ on a smooth manifold $M$ determines an isomorphism of coalgebras (over $C^\infty(M)$) 
\begin{equation}
\PBW:\sections{S(TM)}\xto{\isomorphism}D(M),
\end{equation}
called the Poincar\'e--Birkhoff--Witt (PBW) map. Here $D(M)$ denotes the space of differential operators on $M$.

Geometrically, an affine connection $\nabla$ induces an exponential map
$TM\to M\times M$, which is a well-defined diffeomorphism
from a neighborhood of the zero section of $TM$ to a neighborhood of 
the diagonal $\Delta(M)$ of $M\times M$.
Sections of $S(TM)$ can be viewed as fiberwise distributions on $TM$ supported on the zero section, while $D(M)$
can be viewed as the space of source-fiberwise distributions on $M\times M$ supported on the diagonal $\Delta (M)$. The map 
$\PBW:\sections{S(TM)}\to D(M)$
is simply the push-forward of fiberwise distributions through 
the exponential map $\exp^\nabla:TM\to M\times M$ and is clearly an isomorphism of coalgebras over $C^\infty (M)$. 

Even though, for a \emph{graded} manifold $\cM$ endowed with a torsion-free affine connection $\nabla$, we lack an exponential map 
$\exp^\nabla:\T\cM\to\cM\times\cM$, a PBW map can still be defined
purely algebraically thanks to the iteration formula introduced
in~\cite{LSX:14}.

\begin{lemma}
Let $\cM$ be a $\ZZ$-graded manifold and let $\nabla$ be a torsion-free 
affine connection on $\cM$. The Poincar\'e-Birkhoff-Witt map
inductively defined by the relations\footnote{We would like to thank Hsuan-Yi Liao for correcting 
a sign error in the inductive formula defining the map $\PBW$.}
\begin{gather*} 
\PBW(f)=f, \quad\forall f\in C^\infty(\cM); \\
\PBW(X)=X, \quad\forall X\in\XX(\cM); 
\end{gather*}
and
\begin{multline*}
\PBW(X_0\odot \cdots\odot X_n)=\frac{1}{n+1} \sum_{k=0}^n 
(-1)^{\degree{X_k}(\degree{X_0}+\cdots+\degree{X_{k-1}})}
\big\{ X_k\cdot\PBW(X_0\odot\cdots\odot \widehat{X}_k\odot\cdots\odot X_n) 
\\ -\PBW\big(\nabla_{X_k}(X_0\odot\cdots\odot \widehat{X}_k\odot\cdots\odot X_n)\big)\big\}, 
\end{multline*}
for all $n\in\NN$ and $X_0,\dots,X_n\in\XX(\cM)$, establishes an isomorphism 
\begin{equation}\label{eq:PBW}
\PBW:\sections{S(T\cM)}\xto{\isomorphism}D(\cM) 
.\end{equation}
of coalgebras over $C^\infty(\cM)$. \end{lemma}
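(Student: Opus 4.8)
The plan is to prove the lemma on the $\PBW$ map, since Theorem~\ref{horse} will then follow from it by the strategy of~\cite{LSX:14}, so I focus on the $\PBW$ isomorphism~\eqref{eq:PBW}.

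First I would check that the inductive formula is \emph{well-defined}, i.e.\ that the right-hand side does not depend on the ordering of the symmetric product $X_0\odot\cdots\odot X_n$ nor on the way an element of $\sections{S(T\cM)}$ is written as a $C^\infty(\cM)$-combination of such monomials. The ordering independence amounts to checking that, when one swaps two adjacent factors $X_k$ and $X_{k+1}$, the Koszul signs produced by the prefactor $(-1)^{\degree{X_k}(\degree{X_0}+\cdots+\degree{X_{k-1}})}$ match up; this is the graded analogue of the symmetry verification in~\cite{LSX:14} and uses the torsion-freeness of $\nabla$ precisely at the step where one needs $\nabla_X Y-(-1)^{\degree X\degree Y}\nabla_Y X=[X,Y]$ to move a Lie bracket past the $\PBW$ of the remaining factors. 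The $C^\infty(\cM)$-linearity (so that $\PBW(fX_0\odot X_1\odot\cdots)$ is unambiguous) follows from the Leibniz rule for $\nabla$ together with the first-order part of the operator $X_k\cdot(-)$: the two offending terms cancel, again exactly as in the classical case. All signs here are bookkeeping; the graded partition-of-unity argument guarantees such $\nabla$ exist, which the excerpt has already noted.

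Next I would establish that $\PBW$ is a coalgebra morphism over $C^\infty(\cM)$. Both sides carry the standard (graded) cocommutative comultiplication: on $\sections{S(T\cM)}$ it is the deconcatenation coproduct, on $D(\cM)$ it is dual to composition of functions (the comultiplication for which vector fields are primitive and $f\in C^\infty(\cM)$ are grouplike). One argues by induction on $n$ that $\Delta\bigl(\PBW(X_0\odot\cdots\odot X_n)\bigr)=(\PBW\otimes\PBW)\Delta(X_0\odot\cdots\odot X_n)$: apply $\Delta$ to the inductive formula, use that $X_k\cdot(-)$ acts on $D(\cM)$ as a degree-respecting coderivation plus a term accounting for the symbol, invoke the induction hypothesis, and reassemble. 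Filtration-wise, $\PBW$ sends the degree-$\leq n$ part of $\sections{S(T\cM)}$ into the order-$\leq n$ part of $D(\cM)$ and the induced map on associated graded is, by inspection of the leading term $\frac{1}{n+1}\sum_k X_k\cdot(\text{symbol of }\PBW(\cdots))$, the identity on $\sections{S(T\cM)}\cong\gr D(\cM)$; since a filtered morphism that is an isomorphism on associated graded is itself an isomorphism, $\PBW$ is bijective.

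The main obstacle I anticipate is neither the bijectivity nor the coalgebra property per se, but controlling the \emph{Koszul signs} throughout the two compatibility checks in the first paragraph — especially disentangling the interaction between the sign $(-1)^{\degree{X_k}(\degree{X_0}+\cdots+\degree{X_{k-1}})}$ in the definition and the signs introduced when $\nabla_{X_k}$ is distributed over a symmetric product via the graded Leibniz rule. This is precisely where the footnote's corrected sign enters, and it is the step that must be done with care rather than by analogy. Once the lemma is in hand, Theorem~\ref{horse} is obtained by transporting the Lie-algebra-up-to-homotopy structure: one twists the coalgebra isomorphism $\PBW$ by the homological vector field, observes that the failure of $\PBW$ to intertwine $L_Q$ (on sections of $S(T\cM)$) with the induced differential on $D(\cM)$ is measured at second order exactly by $\At_\cM$ as in formula~\eqref{velociraptor}, and reads off the higher brackets $\lambda_k$ from the higher-order discrepancies, checking the $L_\infty[1]$ axioms as the co-square-zero condition for the resulting codifferential on $\sections{S(T\cM)}$.
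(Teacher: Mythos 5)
Your outline is essentially correct, and it supplies what the paper leaves implicit: the lemma is stated there without proof, the text simply deferring to the iteration formula of~\cite{LSX:14}, and your three steps --- well-definedness of the recursion on the symmetric algebra, the coalgebra-morphism property proved by induction using that left multiplication by a vector field is compatible with the coproduct on $D(\cM)$ and that $\nabla_{X_k}$ is a coderivation of $\sections{S(T\cM)}$, and bijectivity via the order filtration (the symbol of $\PBW(X_0\odot\cdots\odot X_n)$ is $X_0\odot\cdots\odot X_n$, so $\PBW$ induces the identity on associated graded objects) --- are exactly the standard argument of that reference transported to the graded setting. One correction: torsion-freeness of $\nabla$ is not what makes the recursion well-defined. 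Since the right-hand side is already symmetrized over $k$ with the Koszul prefactors $(-1)^{\degree{X_k}(\degree{X_0}+\cdots+\degree{X_{k-1}})}$, invariance under transposing adjacent factors is automatic, and the $C^\infty(\cM)$-bilinearity check uses only the cancellation of the $Y(f)$-terms between $Y\cdot(fX)$ and $\nabla_Y(fX)$ that you describe; no identity of the form $\nabla_XY-(-1)^{\degree{X}\degree{Y}}\nabla_YX=[X,Y]$ is invoked. Indeed, in~\cite{LSX:14} the PBW map is a coalgebra isomorphism for an arbitrary connection; torsion-freeness is imposed here so that the construction matches the geometric exponential picture and, later, so that $\At_\cM$ is graded symmetric and can serve as the binary bracket $\lambda_2$. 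With that caveat, the argument is sound, and your closing remarks about deducing Theorem~\ref{horse} by transferring $L_Q$ through $\PBW$ match the route the paper takes via Theorem~\ref{zebra}.
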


Now assume that $(\cM, Q)$ is a dg-manifold.
The homological vector field $Q$ induces a degree $+1$ coderivation of $D(\cM)$ defined by the Lie derivative:
\begin{equation}
L_Q(X_1\cdots X_n)=\sum_{k=1}^n (-1)^{|X_1|+\cdots +|X_{k-1}|}
X_1\cdots X_{k-1}[Q,X_k]X_{k+1}\cdots X_n 
.\end{equation}

Now using the isomorphism of coalgebras $\PBW$ as in Eq.~\eqref{eq:PBW}
to transfer $L_Q$ from $D(\cM)$ to $\sections{S(T\cM)}$, we obtain $\delta:=(\PBW)^{-1}\circ L_Q\circ\PBW$, a 
degree $1$ coderivation of $\sections{S(T\cM)}$.
Finally, dualizing $\delta$, we obtain an operator
\[ D:\sections{\hatS(T^\vee\cM)}\to\sections{\hatS(T^\vee\cM)} \]
as \[ \sections{\hatS(T^\vee\cM)}\cong
\Hom_{C^\infty(\cM)}(\sections{S(T\cM)},C^\infty(\cM)) .\]
\begin{theorem}\label{zebra}
Let $(\cM,Q)$ be a dg-manifold and let $\nabla$ be a torsion-free affine connection on $\cM$.
\begin{enumerate}
\item The operator $D$, dual to $(\PBW)^{-1}\circ L_Q\circ\PBW$, is a degree $+1$ derivation of the graded algebra $\sections{\widehat{S}(T^\vee\cM)}$ satisfying $D^2=0$. 
\item There exists a sequence $\{R_k\}_{k\geq 2}$ of homomorphisms $R_k\in\Hom(S^k T\cM,T\cM[-1])$, 
whose first term $R_2$ is precisely the Atiyah $1$-cocycle $\At_\cM$, 
such that $D=L_Q+\sum_{k=2}^\infty\widetilde{R_k}$, where $\widetilde{R_k}$ denotes 
the $C^\infty (\cM)$-linear operator on $\sections{\widehat{S}(T^\vee\cM)}$ corresponding to $R_k$. 
\end{enumerate}
\end{theorem}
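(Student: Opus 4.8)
The plan is to deduce both statements by transporting elementary facts about $L_Q$ on $D(\cM)$ through the coalgebra isomorphism $\PBW$ and then dualizing, and to identify $R_2$ by an explicit computation on quadratic symmetric tensors. For part (1): the operator $\delta=(\PBW)^{-1}\circ L_Q\circ\PBW$ is already a degree $+1$ coderivation of $\sections{S(T\cM)}$, and the standard cocommutative coalgebra structure there (the one making $\PBW$ a morphism, vector fields being primitive) dualizes, via the stated identification $\sections{\hatS(T^\vee\cM)}\isomorphism\Hom_{C^\infty(\cM)}(\sections{S(T\cM)},C^\infty(\cM))$, to the completed symmetric-algebra structure on $\sections{\hatS(T^\vee\cM)}$; under this duality a coderivation of internal degree $+1$ becomes a derivation of internal degree $+1$, so $D=\delta^\vee$ is a degree $+1$ derivation of the graded algebra $\sections{\hatS(T^\vee\cM)}$. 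Since $\lie{Q}{Q}=0$, we have $L_Q\circ L_Q=\frac{1}{2}L_{\lie{Q}{Q}}=0$ on $D(\cM)$, hence $\delta^2=0$, and therefore $D^2=0$.

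For part (2), the crucial point is that everything respects the order filtration. The coderivation $L_Q=\mathrm{ad}_Q$ preserves each $D^{\leq n}(\cM)$ (bracketing with a first-order operator does not raise order) as well as the kernel of $\mathrm{ev}_1\colon D(\cM)\to C^\infty(\cM)$, $P\mapsto P(1)$; and, as one checks from the inductive formula exactly as in the smooth case, $\PBW$ carries $\sections{S^{\leq n}(T\cM)}$ onto $D^{\leq n}(\cM)$ and $\bigoplus_{k\geq 1}\sections{S^k(T\cM)}$ onto $\ker(\mathrm{ev}_1)$. Hence $\delta$ preserves both the symmetric filtration and its positive-weight part, and so decomposes as $\delta=\sum_{j\geq 1}\delta_{(j)}$ with $\delta_{(j)}\colon\sections{S^n(T\cM)}\to\sections{S^{n+1-j}(T\cM)}$; since the coproduct is weight-homogeneous each $\delta_{(j)}$ is itself a coderivation, hence the one cogenerated by $R_j:=\mathrm{pr}_{S^1}\circ\delta|_{\sections{S^j(T\cM)}}$ when $j\geq 2$, while $\delta_{(1)}$ is cogenerated by $L_Q|_{\XX(\cM)}$ together with $Q|_{C^\infty(\cM)}$ and is therefore the Lie derivative $L_Q$ on $\sections{S(T\cM)}$.

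Reading the twisted-linearity relation $\delta(fs)=Q(f)\,s+(-1)^{\degree{f}}f\,\delta(s)$ — which $\delta$ inherits from the corresponding relation for $L_Q$ on $D(\cM)$ and the $C^\infty(\cM)$-linearity of $\PBW$ — degree by degree shows that the entire failure of $C^\infty(\cM)$-linearity is concentrated in $\delta_{(1)}$, so each $R_j$ with $j\geq 2$ is $C^\infty(\cM)$-linear, i.e.\ $R_j\in\Hom(S^j T\cM,T\cM[-1])$. Dualizing the decomposition gives $D=L_Q+\sum_{j\geq 2}\widetilde{R_j}$, where $L_Q$ now denotes the Lie derivative on $\sections{\hatS(T^\vee\cM)}$ and $\widetilde{R_j}$ the $C^\infty(\cM)$-linear derivation dual to the coderivation cogenerated by $R_j$.

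It then remains to identify $R_2$ with $\At_\cM$, which I would do by a direct second-order computation. Applying the inductive PBW formula once and using that $\nabla$ is torsion-free, one obtains $\PBW(X\odot Y)=X\cdot Y-\nabla_X Y$ in $D(\cM)$, hence $(\PBW)^{-1}(U\cdot V)=U\odot V+\nabla_U V$ for vector fields $U,V$. Feeding $X\odot Y$ through $\delta=(\PBW)^{-1}\circ L_Q\circ\PBW$, expanding $L_Q(X\cdot Y-\nabla_X Y)=\lie{Q}{X}\cdot Y+(-1)^{\degree{X}}X\cdot\lie{Q}{Y}-\lie{Q}{\nabla_X Y}$, and converting back, the $\sections{S^2(T\cM)}$-component reproduces the Lie derivative while the $\sections{S^1(T\cM)}$-component equals $L_Q(\nabla_X Y)-\nabla_{L_Q X}Y-(-1)^{\degree{X}}\nabla_X(L_Q Y)=\At_\cM(X,Y)$ as in \eqref{eq:At}, so $R_2=\At_\cM$. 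I expect the main difficulty to be organizational rather than conceptual: one must pin down the precise sense in which $\sections{S(T\cM)}$ is cofreely cogenerated over $C^\infty(\cM)$ (so that the splitting $\delta=\sum_j\delta_{(j)}$ and the ``cogenerated by'' statements are legitimate), and then track the Koszul signs consistently through the dualization and the quadratic computation — in particular through whatever normalization makes $R_2$ equal $\At_\cM$ exactly rather than up to an overall sign.
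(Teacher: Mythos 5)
The paper states Theorem~\ref{zebra} without proof, but your argument is precisely the intended one (it is the graded transcription of the construction in~\cite{LSX:14}): conjugate $L_Q=[Q,-]$ through the coalgebra isomorphism $\PBW$, use that both $L_Q$ and $\PBW$ respect the order filtration and the augmentation to split $\delta$ into weight-homogeneous coderivations, identify the weight-preserving piece with the Lie derivative, read off the cogenerators $R_k$ (whose $C^\infty(\cM)$-linearity follows, as you say, because the failure of linearity of $\delta$ is concentrated in weight $0$), and dualize. The one thing to correct is the sign in your identification of $R_2$. With $\PBW(X\odot Y)=X\cdot Y-\nabla_XY$ (which is right, given torsion-freeness) one gets
\[ \delta(X\odot Y)=[Q,X]\odot Y+(-1)^{\degree{X}}X\odot[Q,Y]
+\nabla_{[Q,X]}Y+(-1)^{\degree{X}}\nabla_X[Q,Y]-[Q,\nabla_XY], \]
so the $\sections{T\cM}$-component is
$\nabla_{L_QX}Y+(-1)^{\degree{X}}\nabla_X(L_QY)-L_Q(\nabla_XY)=-\At_\cM(X,Y)$,
the \emph{negative} of the expression you wrote down, since Eq.~\eqref{eq:At} is $L_Q(\nabla_XY)$ \emph{minus} the two $\nabla$-terms. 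This is exactly the overall-sign normalization you flagged: it must be absorbed in the convention relating $R_k$ to the dual operator $\widetilde{R_k}$ (e.g.\ via the Koszul sign in $\langle D\xi,s\rangle=Q\langle\xi,s\rangle-(-1)^{\degree{\xi}}\langle\xi,\delta s\rangle$), or by defining $R_k$ as the negative of the corestriction. With that convention pinned down, the proof is complete in outline; the remaining work is the bookkeeping you already identified (cofreeness of $\sections{S(T\cM)}$ over $C^\infty(\cM)$ so that ``cogenerated by'' is legitimate, which holds because $\sections{T\cM}$ is locally free).
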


Finally we note that Theorem~\ref{horse} 
is a consequence of Theorem~\ref{zebra}.

\section*{Acknowledgements}
We would like to thank several institutions for their hospitality 
while work on this project was being done:
Penn State University (Mehta),  and Universit\'e Paris Diderot (Xu).
We also wish to thank Hsuan-Yi Liao, Dmitry Roytenberg and Boris Shoikhet for inspiring discussions.

\bibliographystyle{amsplain}
\bibliography{mathscinet}
\end{document}